\let\amsmarkboth\markboth    %%%%%%%%Bug tetex 3.0 !!!!!
\let\markboth\amsmarkboth
   \def\bbl@arg{#1}%
   \def\bbl@arg{#2}%
   \edef\bbl@tempa{\the\toks@}%
   \edef\bbl@tempb{\the\toks8}%
   \protected@edef\bbl@tempa{%
     \noexpand\org@markboth{\bbl@tempa}{\bbl@tempb}}%
\DeclareRobustCommand*\ams@disablelinebreak{\def\\{ \ignorespaces}}
\def\maketitle{\par
   \@topnum\z@ %
   \@setcopyright
   \thispagestyle{firstpage}%
   \uppercasenonmath\shorttitle
   \ifx\@empty\shortauthors \let\shortauthors\shorttitle
   \else \andify\shortauthors
   \fi
   \@maketitle@hook
   \begingroup
   \@maketitle
   \toks@\@xp{\shortauthors}\@temptokena\@xp{\shorttitle}%
   \protected@edef\@tempa{%
     \@nx\markboth{\ams@disablelinebreak
       \@nx\MakeUppercase{\the\toks@}}{\the\@temptokena}}%
   \@tempa
   \endgroup
   \c@footnote\z@
   \@cleartopmattertags
}
\numberwithin{equation}{section}
\newcommand{\om}{\omega}
\newcommand{\dsp}{\displaystyle}
\newcommand{\R}{{\mathbb R}}
\newcommand{\RR}{{\mathbb R}^2}
\newcommand{\curl}{{\rm curl}\,}
\newcommand{\diver}{{\rm div}\,}
\newcommand{\1}{\mathbf 1}
\newtheorem{theorem}{Theorem}[section]
\newtheorem{proposition}[theorem]{Proposition}
\newtheorem{lemma}[theorem]{Lemma}
\theoremstyle{definition}
\theoremstyle{remark}
\newtheorem{remark}[theorem]{Remark}
\newcommand{\dt}{\partial_t}
\begin{document}

%% title page

\title[A uniqueness criterion for the Vlasov-Poisson system]{A uniqueness criterion for unbounded solutions to the Vlasov-Poisson system}

\author[Evelyne Miot]{Evelyne Miot}
\address[E. Miot]{Centre de Math\'ematiques Laurent Schwartz\\ \'Ecole Polytechnique \\ 91128 Palaiseau, France}  \email{evelyne.miot@math.polytechnique.fr}

\subjclass[2010]{Primary 35Q83; Secondary  35A02, 35A05, 35A24}
\keywords{Vlasov-Poisson system, uniqueness condition, propagation of the moments, logarithmic blow-up, steady states in two dimensions}

\date{\today}

\begin{abstract}
We prove uniqueness for the Vlasov-Poisson system in two and three dimensions under the condition that the $L^p$ norms of the macroscopic density growth at most linearly with respect to $p$. This allows for solutions with logarithmic singularities. 
We provide explicit examples of initial data that fulfill the uniqueness condition and that exhibit a logarithmic blow-up. In the gravitational two-dimensional case, such states are intimately related to radially symmetric steady solutions of the system. Our method relies on the Lagrangian formulation for the solutions, exploiting the second-order structure of the corresponding ODEs.

\end{abstract}

\maketitle

\section{Introduction}

The purpose of this article is to establish a uniqueness result for the Vlasov-Poisson system in dimension $n=2$ or $n=3$
\begin{equation}
\label{syst:VP}\begin{cases}
\dsp  \partial_t f+v\cdot \nabla_x f+E\cdot \nabla_v f=0,\quad (t,x,v)\in \R_+\times \R^n\times \R^n \\
\dsp E(t,x)=\gamma \int_{\R^n} \frac{x-y}{|x-y|^n} \rho(t,y)\,dy\\
\dsp \rho(t,x)=\int_{\R^n} f(t,x,v)\,dv,\end{cases}
\end{equation} where $\gamma=\pm 1$.
The system \eqref{syst:VP} is a physical model for the evolution of a system of particles  interacting via a self-induced  force field $E$. The interaction is gravitational if $\gamma=-1$ or Coulombian if $\gamma=1$. The unknown $f=f(t,x,v)\geq 0$ denotes the microscopic density of the particles, and $\rho=\rho(t,x)\geq 0$ their macroscopic density.

\medskip

A wide literature has been devoted to the Cauchy theory for the Vlasov-Poisson system.
 Ukai and Okabe \cite{UO} established global existence and uniqueness of smooth solutions in two dimensions. In any dimension, global existence of weak solutions with finite energy is a result due to Arsenev \cite{A}. In three dimensions,  global existence and uniqueness of compactly supported classical solutions where obtained by Pfaffelmoser \cite{Pf}  by  Lagrangian techniques. Simultaneously,  Lions and Perthame \cite{LP} constructed global weak solutions with finite velocity moments. More precisely, they proved that if  $$f_0\in L^1\cap L^\infty(\R^3)\quad \text{and }\iint_{\R^3\times \R^3} |v|^m f_0<\infty\quad \text{for some }m>3,$$ then there exists a corresponding solution  $f\in L^\infty(\R_+,L^1\cap L^\infty(\R^3))$ such that$$\forall T>0,\quad \sup_{t\in [0,T]}\iint_{\R^3\times \R^3} |v|^m f(t,x,v)\,dx\,dv<\infty.$$If $m>6$ such a solution generates a uniformly bounded force field. 
 We also refer to the works by Gasser, Jabin and Perthame \cite{GJP}, Salort \cite{salort} and Pallard \cite{pallard-1, pallard-2} for further results concerning global existence and propagation of the moments. Another issue in the setting of weak solutions consists in determining sufficient conditions for uniqueness. Robert \cite{robert} established uniqueness among weak solutions that are compactly supported. This result was extended by  Loeper \cite{loeper}, who proved uniqueness in the class of weak solutions with bounded macroscopic density
\begin{equation}
\label{cond:loeper}
\forall T>0,\quad \rho\in L^\infty([0,T],L^\infty(\R^n)).
\end{equation}

The main result of this paper generalizes Loeper's uniqueness condition \eqref{cond:loeper} as follows:
\begin{theorem}\label{thm:VP-1}
Let $T>0$.
There exists at most one weak solution $f\in L^\infty([0,T],L^1\cap L^\infty(\R^n\times \R^n))$ of the Vlasov-Poisson system on $[0,T]$ such that
\begin{equation}\label{condition:miot}
\sup_{[0,T]} \sup_{p\geq 1} \frac{\|\rho(t)\|_{L^p}}{p}<+\infty.
\end{equation}
\end{theorem}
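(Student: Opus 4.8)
The plan is to pass to the Lagrangian description and exploit that, along characteristics, the Vlasov equation reduces to the second-order system $\ddot X=E(t,X)$. \emph{Step 1 (Lagrangian reduction).} I would first show that \eqref{condition:miot} forces the field $E$ of any such weak solution to be log-Lipschitz in $x$, uniformly in $t$: decomposing the Newtonian kernel $x/|x|^{n}$ into the pieces supported in $\{|x-y|<2r\}$, $\{2r\le|x-y|<1\}$ and $\{|x-y|\ge1\}$ (with $r=|x-y|$) and applying Hölder's inequality on each piece with the Lebesgue exponent tuned to the scale $r$ — this is precisely where the \emph{linear} growth of $\|\rho(t)\|_{L^{p}}$ in $p$ enters — one obtains $|E(t,x)-E(t,y)|\le C\,r(1+|\log r|)^{2}$ together with $\|E\|_{L^{\infty}([0,T]\times\R^{n})}\le C$. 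An Osgood uniqueness theorem then makes the characteristic flow $Z=(X,V)$ of $\dot X=V$, $\dot V=E(t,X)$ well defined, and a weak solution with such a field is necessarily transported by it, $f(t)=Z(t)_{\#}f_{0}$. Given two solutions $f_{1},f_{2}$ with the same datum and flows $(X_{i},V_{i})$, it therefore suffices to prove $X_{1}(t,z)=X_{2}(t,z)$ for $f_{0}$-a.e.\ $z=(x,v)$ and every $t\le T$ (then $V_{i}=\dot X_{i}$ coincide as well). Integrating $\ddot X_{i}=E_{i}(t,X_{i})$ twice and cancelling the common datum $X_{i}(0)=x$, $\dot X_{i}(0)=v$ yields the exact identity
\begin{equation}\label{eq:duhamel-plan}
X_{1}(t,z)-X_{2}(t,z)=\int_{0}^{t}(t-s)\,\big[E_{1}(s,X_{1}(s,z))-E_{2}(s,X_{2}(s,z))\big]\,ds .
\end{equation}

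\emph{Step 2 (stability of the field).} Besides the log-Lipschitz bound above, I would establish a quantitative stability estimate in the spirit of Loeper's work, but adapted to unbounded densities: if $\mu,\nu$ are probability densities with $\|\mu\|_{L^{p}},\|\nu\|_{L^{p}}\le Cp$ for all $p\ge1$, then $\|K*\mu-K*\nu\|_{L^{2}}\le C\,W_{2}(\mu,\nu)\big(1+|\log W_{2}(\mu,\nu)|\big)^{1/2}$ (here $K(x)=x/|x|^{n}$). This follows from Loeper's argument — transport $\mu$ onto $\nu$ by the optimal map and differentiate the potential along the transport rays — with the single modification that, $\mu,\nu$ being unbounded, one estimates $\int|\nabla\varphi|^{2}\rho_{s}$ over the displacement interpolant $\rho_{s}$ by Hölder's inequality with exponent $p$ and interpolation between $L^{2}$ and $L^{\infty}$ (note $\nabla\varphi$ is proportional to $K*(\mu-\nu)\in L^{\infty}$), using the displacement convexity of $\rho\mapsto\int\rho^{p}$ to carry the bound $\|\rho_{s}\|_{L^{p}}\le Cp$ along the interpolation, and finally optimizing in $p$. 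Combining this with one further Hölder inequality against the weight $\rho_{2}(s)$ and an interpolation gives, for the two solutions,
\begin{equation}\label{eq:B-plan}
\int_{\R^{n}}\big|E_{1}(s,y)-E_{2}(s,y)\big|^{2}\rho_{2}(s,y)\,dy\;\le\;C\,W_{2}\big(\rho_{1}(s),\rho_{2}(s)\big)^{2}\Big(1+\big|\log W_{2}(\rho_{1}(s),\rho_{2}(s))\big|\Big)^{2}.
\end{equation}

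\emph{Step 3 (a second-order Osgood argument).} Put $\mathcal D(t)=\iint|X_{1}(t,z)-X_{2}(t,z)|^{2}f_{0}(z)\,dz$ (after normalising $\|f_{0}\|_{L^{1}}=1$); since $(X_{1}(t),X_{2}(t))_{\#}f_{0}$ is a coupling of $\rho_{1}(t)$ and $\rho_{2}(t)$ one has $W_{2}(\rho_{1}(t),\rho_{2}(t))^{2}\le\mathcal D(t)$. Taking the $L^{2}(f_{0}\,dz)$-norm in \eqref{eq:duhamel-plan}, using Minkowski's integral inequality and writing $E_{1}(s,X_{1})-E_{2}(s,X_{2})=\big(E_{1}(s,X_{1})-E_{1}(s,X_{2})\big)+\big(E_{1}-E_{2}\big)(s,X_{2})$, the first difference is controlled by the log-Lipschitz bound of Step 1 and Jensen's inequality (applicable because $u\mapsto\theta(\sqrt u)^{2}$ is concave when the modulus $\theta$ is concave and vanishes at $0$), while the second is exactly the left side of \eqref{eq:B-plan}, since $(X_{2}(s))_{\#}f_{0}=\rho_{2}(s)$, and there $W_{2}\le\sqrt{\mathcal D}$. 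This produces the integral inequality
\begin{equation}\label{eq:int-ineq-plan}
\mathcal D(t)^{1/2}\le C\int_{0}^{t}(t-s)\,\omega\!\big(\mathcal D(s)^{1/2}\big)\,ds ,\qquad \omega(r)=r\,(1+|\log r|)^{2}.
\end{equation}
The weight $(t-s)$ — i.e.\ the second-order structure of \eqref{eq:duhamel-plan} — is what makes \eqref{eq:int-ineq-plan} usable. Denoting by $P(t)$ the right-hand side, one has $P(0)=\dot P(0)=0$, $\dot P\ge0$, and, as long as $P$ stays small so that $\omega$ is increasing, $\ddot P(t)=C\,\omega(\mathcal D(t)^{1/2})\le C\,\omega(P(t))$; multiplying by $\dot P$ and integrating gives $\tfrac12\dot P^{2}\le C\int_{0}^{P}\omega$, i.e.\ $\dot P\le\Psi(P)$ with $\Psi(r)=\big(2C\int_{0}^{r}\omega\big)^{1/2}\sim c\,r\,(1+|\log r|)$. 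Since $\int_{0^{+}}dr/\Psi(r)=+\infty$, Osgood's criterion forces $P\equiv0$, hence $\mathcal D\equiv0$, on a neighbourhood of $t=0$; because all constants are uniform, a continuation argument then propagates $\mathcal D\equiv0$ to all of $[0,T]$. This gives $X_{1}=X_{2}$ and $V_{1}=V_{2}$ $f_{0}$-a.e., whence $f_{1}=f_{2}$ on $[0,T]$.

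\emph{Main obstacle.} The delicate point is the field-difference term \eqref{eq:B-plan}: without an $L^{\infty}$ bound on $\rho$ one cannot invoke Loeper's estimate directly, and after carrying the $L^{p}$-growth through the transport and Hölder estimates one is left with a modulus carrying a logarithm to the power $2$, as in \eqref{eq:int-ineq-plan}. A first-order Gronwall/Osgood scheme would require $\int_{0^{+}}dr/\big(r(1+|\log r|)^{2}\big)=+\infty$, which is false. The second-order structure of the characteristic system — the extra factor $(t-s)$ in \eqref{eq:duhamel-plan}, equivalently the passage from $\dot{\mathcal D}\lesssim\omega(\mathcal D)$ to $\ddot P\lesssim\omega(P)$ — effectively replaces $\omega$ by $\Psi\sim(\int_{0}^{\cdot}\omega)^{1/2}$, and $\int_{0^{+}}dr/\Psi(r)=+\infty$ is then satisfied, precisely at the borderline.
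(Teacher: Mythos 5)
Your proposal is correct in its overall architecture and reaches the same conclusion by the same underlying mechanism as the paper --- the Lagrangian reduction, the observation that the characteristics solve a \emph{second-order} ODE so that the Duhamel formula carries the weight $(t-s)$, and the resulting ``square root of the antiderivative'' gain that turns a divergent Osgood integral into a convergent one. Where you genuinely diverge from the paper is in how the two field terms are estimated. The paper keeps the Lebesgue exponent $p$ finite throughout: Lemma \ref{lemma:fundamental} gives the H\"older modulus $Cp\,(\|g\|_{L^1}+\|g\|_{L^p})\,|x-y|^{1-n/p}$, the field-difference term $E_1-E_2$ is handled by the push-forward identity \eqref{eq:E} plus Fubini (so the \emph{same} kernel lemma applies, now integrated against $\rho_2$), and the distance is the $L^1(f_0)$ distance between the spatial flows; the double time integral then yields $\mathcal F(t)\le (Ct)^{2p/n}$ and one lets $p\to\infty$ only at the very end, Yudovich-style. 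You instead optimize in $p$ pointwise at the start, obtaining the explicit modulus $r(1+|\log r|)^2$ for a single field, and you replace the paper's Fubini symmetrization for $E_1-E_2$ by an optimal-transport (Loeper-type) stability estimate in $W_2$, adapted to the linear $L^p$ growth via displacement convexity of $\int\rho^p$ and $L^2$--$L^\infty$ interpolation. I checked your Step 2: running Loeper's duality argument with H\"older at exponent $p$ and optimizing $p\sim|\log W_2|$ does give $\|E_1-E_2\|_{L^2}\le C\,W_2(1+|\log W_2|)^{1/2}$ and then your weighted bound with the square of the logarithm, so the scheme closes exactly as you say. What the paper's route buys is elementarity (no optimal transport, no displacement convexity, no concavity/Jensen discussion --- the exponent $1-n/p$ is trivially concave) and a single kernel lemma doing all the work; what your route buys is a reusable quantitative stability estimate for the field in terms of $W_2$ under the growth condition \eqref{condition:miot}, and a formulation that makes the borderline nature of the condition transparent ($\int_{0^+}dr/(r(1+|\log r|)^2)<\infty$ but $\int_{0^+}dr/(r(1+|\log r|))=\infty$).

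Two points in your write-up deserve more than a sentence. First, the representation $f(t)=Z(t)_{\#}f_0$ for a merely $L^\infty_t(L^1\cap L^\infty)$ weak solution is not automatic from well-posedness of the ODE; the paper obtains it from the DiPerna--Lions theory, which applies here because \eqref{condition:miot} and Calder\'on--Zygmund give $\nabla E\in L^\infty_t(L^p_x)$ --- you should invoke that (or a superposition principle) rather than assert it. Second, minor but worth recording: $\rho_i(t)$ need not have finite second moments, so $W_2(\rho_1(t),\rho_2(t))$ must be seen to be finite through the explicit coupling $(X_1(t),X_2(t))_{\#}f_0$ (which has uniformly bounded cost since $\sup|X_1-X_2|\le CT^2\|E\|_{L^\infty}$); and the Jensen step requires replacing $u\mapsto\omega(\sqrt u)^2$ by its concave envelope, since $r\mapsto r(1+|\log r|)^2$ is not monotone near $r=1$. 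None of these affects the validity of the argument.
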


\medskip

Our next task is to determine sufficient conditions on the initial data for which any corresponding weak solution satisfies the uniqueness criterion of Theorem \ref{thm:VP-1}. We observe that \eqref{condition:miot} is fulfilled if for example  
\begin{equation}\label{ineq:log}
\forall t\in [0,T],\quad \rho(t,x)\leq C(1+\ln_-|x-\xi(t)|)
\end{equation} 
for some $\xi(t)\in \R^n$ (see \eqref{eq:stirling}). Such densities where constructed by Caprino, Marchioro, Miot and Pulvirenti \cite{italiens-miot} as solutions of a related equation to \eqref{syst:VP}. On the other hand, there exist solutions of \eqref{syst:VP} that satisfy \eqref{ineq:log} initially, as will be shown in Theorems \ref{thm:VP-3} and \ref{thm:VP-4}. However, in general, it is not clear  whether a logarithmic divergence like \eqref{ineq:log} persists at positive times. In fact, in order to propagate a control on the $L^p$ norms of the macroscopic density we also need a description of the initial data at the microscopic level.
In the above-mentioned previous works \cite{LP,pallard-1,pallard-2,salort}, the condition \eqref{cond:loeper} is met by assuming that the initial data satisfy
\begin{equation*}%\label{condition:uniqueness}
 \forall R>0,\quad \forall T>0, \sup_{t\in [0,T]}\sup_{x\in \R^n}\int_{\R^n} \sup_{|y-x|\leq RT, |v-w|\leq RT^2}f_0(y+vt,w)\,dv<+\infty.
\end{equation*}
In the present paper we shall require instead a suitable control on the velocity moments, having in mind the well-known property that velocity moments control the norms of the density, see \eqref{ineq:lp}:
\begin{theorem} \label{thm:VP-2}
Let $f_0\in L^1\cap L^\infty(\R^n\times \R^n)$ be nonnegative and such that
\begin{equation*}\begin{split}
 \iint_{\R^n\times \R^n} |v|^m f_0(x,v)\,dx\,dv<+\infty
\end{split}\end{equation*}for some $m>n^2-n$.
Let $T>0$ and let $f\in L^\infty([0,T],L^1\cap L^\infty(\R^n\times \R^n))$ be a weak solution provided by  \cite[Theo. 1]{LP}\footnote{The result of \cite{LP} is stated for $n=3$. The case $n=2$ can be obtained by a straightforward adaptation.}.
If $f_0$ satisfies
\begin{equation*}\begin{split}
\forall k\geq 1,\quad \iint_{\R^n\times \R^n} |v|^k f_0(x,v)\,dx\,dv\leq (C_0 k)^{\frac{k}{n}},
\end{split}\end{equation*}for some constant $C_0$, then $f$ satisfies the uniqueness condition \eqref{condition:miot}.
\end{theorem}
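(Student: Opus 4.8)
The plan is to show that the Gaussian-type control on the velocity moments assumed on $f_0$ propagates to all of $[0,T]$, and then to read off \eqref{condition:miot} from the standard interpolation inequality \eqref{ineq:lp} between moments and $L^p$ norms of $\rho$. Set
\[
M_k(t):=\iint_{\R^n\times\R^n}|v|^k f(t,x,v)\,dx\,dv ,\qquad k\ge 1 .
\]
The heart of the matter is the claim that there is a constant $C=C(T,C_0,\|f_0\|_{L^1\cap L^\infty})$, \emph{independent of $k$}, such that $M_k(t)\le (Ck)^{k/n}$ for every $t\in[0,T]$ and every $k\ge1$. Granting this, Theorem~\ref{thm:VP-2} follows at once: since $f$ is transported by the flow of the bounded field $E$ one has $\|f(t)\|_{L^\infty}\le\|f_0\|_{L^\infty}$, so \eqref{ineq:lp} gives, with $p=1+k/n$ (i.e.\ $k=n(p-1)$),
\[
\|\rho(t)\|_{L^p}\ \le\ C_n\,\|f_0\|_{L^\infty}^{1-1/p}\,M_{n(p-1)}(t)^{1/p}\ \le\ C_n\,\|f_0\|_{L^\infty}^{1-1/p}\,\bigl(Cn(p-1)\bigr)^{1-1/p}.
\]
Since $p^{1/p}$ stays bounded on $[1,\infty)$, the right-hand side is $\le C'(T)\,p$ uniformly in $p$ and in $t\in[0,T]$ (the range of $p$ close to $1$ being handled by interpolation with $L^1$), which is precisely \eqref{condition:miot}.

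To establish the moment bound I would differentiate $M_k$ along the characteristics $\dot X=V,\ \dot V=E(t,X)$ of the Lions--Perthame solution, obtaining
\[
M_k'(t)\ \le\ k\iint |v|^{k-1}\,|E(t,x)|\,f(t,x,v)\,dx\,dv\ \le\ k\,\|E(t)\|_{L^\infty}\,M_{k-1}(t);
\]
rigorously one performs this computation on the smooth approximations used in \cite{LP} and transfers the estimate to $f$ by Fatou's lemma. The decisive input is that $E_T:=\sup_{[0,T]}\|E(t)\|_{L^\infty}<\infty$, and this is precisely where the hypothesis $m>n^2-n$ is used: splitting the kernel $|x-y|^{1-n}$ over $\{|x-y|\le1\}$ and its complement and applying H\"older gives $\|E(t)\|_{L^\infty}\le \|\rho(t)\|_{L^1}+C_q\,\|\rho(t)\|_{L^q}$ for any $q>n$, while \eqref{ineq:lp} bounds $\|\rho(t)\|_{L^q}$ by $\|f_0\|_{L^\infty}^{1-1/q}M_{n(q-1)}(t)^{1/q}$ with $n(q-1)>n^2-n$; since $f$ is the solution furnished by \cite{LP} for data with a finite moment of order $m>n^2-n$, this moment remains bounded on $[0,T]$, hence $E_T<\infty$.

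With $E_T$ at hand I would close the recursion by H\"older's inequality in the $v$ variable, $M_{k-1}(t)\le \|f_0\|_{L^1}^{1/k}\,M_k(t)^{1-1/k}$, which turns the differential inequality into $\frac{d}{dt}\bigl(M_k^{1/k}\bigr)(t)\le E_T\,\|f_0\|_{L^1}^{1/k}$. Integrating on $[0,t]$,
\[
M_k(t)^{1/k}\ \le\ M_k(0)^{1/k}+TE_T\,\|f_0\|_{L^1}^{1/k}\ \le\ (C_0k)^{1/n}+TE_T\max\bigl(1,\|f_0\|_{L^1}\bigr),
\]
and since $k\ge1$ the additive constant is absorbed into a multiplicative one, $(C_0k)^{1/n}+D\le C_1\,(C_0k)^{1/n}$ with $C_1$ independent of $k$; raising to the $k$-th power yields $M_k(t)\le(C_1^nC_0k)^{k/n}$, which is the claim with $C=C_1^nC_0$. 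The only genuinely delicate point is thus the uniform-in-$t$ (and uniform-in-$k$) bound on $\|E(t)\|_{L^\infty}$ by a single moment of order just above $n^2-n$, i.e.\ the quoted Lions--Perthame estimate; the remainder is the elementary observation that $t\mapsto M_k(t)^{1/k}$ grows at most linearly at a rate independent of $k$, together with routine interpolation.
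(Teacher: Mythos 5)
Your proposal is correct and follows essentially the same route as the paper: boundedness of $E$ on $[0,T]$ from the propagated moment of order $m>n^2-n$ via \eqref{ineq:lp} and \eqref{ineq:bounded}, then a Gronwall estimate showing $\sup_{[0,T]}M_k(t)^{1/k}\leq M_k(0)^{1/k}+C$ after closing the recursion with $M_{k-1}\leq \|f_0\|_{L^1}^{1/k}M_k^{1-1/k}$, and finally \eqref{ineq:lp} again to convert the moment bound into $\|\rho(t)\|_{L^p}\leq Cp$. The only cosmetic differences are that the paper works with the integral form of the inequality along the Lagrangian flow and handles the a priori finiteness of $M_k(t)$ by induction on $k>m$, whereas you differentiate $M_k$ and invoke approximation plus Fatou.
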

Typically, Theorem \ref{thm:VP-2} allows to consider initial densities with compact support in velocity as well as Maxwell-Boltzmann distributions of the type $$f_0(x,v)=e^{-|v|^n}|v|^ph_0(x,v),\quad p\geq 0,\quad h_0\in L^1\cap L^\infty \cap  L^\infty_v(L^1_x).$$
Theorem \ref{thm:VP-2} also does include some initial data with unbounded macroscopic density:
\begin{theorem}\label{thm:VP-3}
There exists a nonnegative $f_0\in L^1\cap L^\infty(\R^n\times \R^n)$ satisfying the assumptions of Theorem  \ref{thm:VP-2} and such that\footnote{Here $\omega_n$ denotes the volume of the unit ball of $\R^n$.} 
\begin{equation*}\begin{split}
&\rho_0(x)=\omega_n \ln_-|x|,\quad \forall x\in \R^n.
\end{split}\end{equation*}
\end{theorem}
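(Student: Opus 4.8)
The plan is to build $f_0$ as the indicator function of a region that is extremely thin in $x$ near the origin but spreads out in $v$ there, so that its $v$-marginal produces exactly the prescribed logarithmic profile while $f_0$ itself stays bounded. Concretely, I would set
\[
f_0(x,v):=\1_{\{\,|v|^{n}\le \ln_-|x|\,\}},\qquad (x,v)\in\R^n\times\R^n,
\]
a nonnegative function supported in $\{|x|\le1\}$ and valued in $\{0,1\}$, so that $f_0\in L^\infty(\R^n\times\R^n)$ automatically. Its velocity marginal is $\rho_0(x)=\meas\{v:\ |v|\le(\ln_-|x|)^{1/n}\}=\omega_n\ln_-|x|$, exactly as required. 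Finally $\|f_0\|_{L^1}=\int_{\R^n}\rho_0\,dx=\omega_n\int_{|x|\le1}\ln_-|x|\,dx<\infty$, since $\ln_-|x|$ has an integrable singularity at the origin in every dimension; hence $f_0\in L^1\cap L^\infty(\R^n\times\R^n)$.

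Next I would compute the velocity moments exactly. Passing to polar coordinates first in $v$ and then in $x$, using $|\Sl^{n-1}|=n\omega_n$ together with the change of variables $s=\ln(1/|x|)$ (which gives $\int_{|x|\le1}(\ln_-|x|)^{\alpha}\,dx=n\omega_n\,n^{-(\alpha+1)}\Gamma(\alpha+1)$), one finds for every $k\ge1$
\[
\iint_{\R^n\times\R^n}|v|^{k}f_0\,dx\,dv
=\frac{n\omega_n}{k+n}\int_{|x|\le1}(\ln_-|x|)^{\frac{k+n}{n}}\,dx
=\frac{(n\omega_n)^{2}}{(k+n)\,n^{\frac kn+2}}\,\Gamma\!\Big(\tfrac kn+2\Big).
\]
In particular every velocity moment of $f_0$ is finite, so $\iint|v|^{m}f_0<\infty$ for some (indeed any) $m>n^2-n$, and $f_0$ meets the standing hypotheses of Theorem~\ref{thm:VP-2}.

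It then remains to check the quantitative bound $\iint|v|^{k}f_0\le(C_0k)^{k/n}$. Here I would insert Stirling's inequality in the form $\Gamma(k/n+2)\le C(n)\,k^{3/2}\,(k/(ne))^{k/n}$, valid for all $k\ge1$, into the identity above; after simplifying the powers of $n$ and bounding $k^{3/2}/(k+n)\le\sqrt k$, this yields
\[
\iint_{\R^n\times\R^n}|v|^{k}f_0\,dx\,dv\ \le\ C_1\,\sqrt k\ \Big(\frac{k}{n^{2}e}\Big)^{\!k/n}
\]
with $C_1=C_1(n)$. Since $k\mapsto(C_1\sqrt k)^{n/k}$ is continuous on $[1,\infty)$ and tends to $1$ as $k\to\infty$, it is bounded, and taking $C_0:=\frac1{n^2e}\sup_{k\ge1}(C_1\sqrt k)^{n/k}<\infty$ turns the last display into $\iint|v|^{k}f_0\le(C_0k)^{k/n}$ for all $k\ge1$, which is precisely the condition required in Theorem~\ref{thm:VP-2}.

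This construction poses no serious obstacle: the only steps needing a little care are the Stirling bookkeeping together with the uniform treatment of the polynomial prefactor for small $k$, and the elementary but crucial observation that the exponent $1/n$ appearing in the moment condition of Theorem~\ref{thm:VP-2} is exactly compensated by the $n$-th power $|v|^n$ in the definition of $f_0$ — it is this matching that makes a logarithmically divergent density admissible.
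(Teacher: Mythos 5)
Your construction is exactly the paper's: your $f_0=\1_{\{|v|^n\le\ln_-|x|\}}$ coincides with the paper's choice $\varphi(|v|^2+\Phi(x))$ with $\varphi=\1_{\R_-}$ and $\Phi(x)=-(\ln_-|x|)^{2/n}$, and your moment bound rests on the same identity $\int_{|x|\le1}(\ln_-|x|)^{\alpha}\,dx=\sigma_n n^{-(\alpha+1)}\Gamma(\alpha+1)$ plus Stirling. The only (harmless) difference is that you verify the condition $\iint|v|^kf_0\le(C_0k)^{k/n}$ directly rather than by invoking Proposition \ref{prop:initial}; the argument is correct.
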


\bigskip

Let us next explain the main idea for proving Theorem \ref{thm:VP-1}. The argument of Loeper \cite{loeper} in the context of uniformly bounded macroscopic densities (see also \cite[Theo. 3.1, Chapter 2]{livrejaune}) uses loglipschitz regularity for the force field
\begin{equation*}%\label{ineq:E}
|E(t,x)-E(t,y)|\leq \left(\|\rho(t)\|_{L^1}+\|\rho(t)\|_{L^\infty}\right)|x-y|(1+|\ln |x-y||),
\end{equation*}
which enables to perform a Gronwall estimate involving the distance between the Lagrangian flows associated to the solutions.

The loglipschitz regularity fails in the setting of unbounded densities. However, for $L^p$ solutions, Sobolev embeddings imply that $E$ is H\"older continuous with exponent and semi-norm estimated explicitely in terms of $p$ and $\|\rho(t)\|_{L^p}$, see  Lemma  \ref{lemma:fundamental} below. This estimate turns out to be sufficient to close the Gronwall estimate as $p\to +\infty$ provided the $L^p$ norms satisfy the condition in  Theorem \ref{thm:VP-1}.

\medskip

The Vlasov-Poisson system presents lots of analogies with the Euler equations for two-dimensional incompressible fluids 
\begin{equation}
\label{eq:euler}
\begin{cases}
\dt \om + u\cdot \nabla \om=0\quad \text{on } \R\times \RR,\\
\om=\curl u,\:\diver u=0,
\end{cases}
\end{equation}
where $u:\R \times  \RR
\rightarrow \RR$ is the velocity and $\om=\curl u$ is the vorticity.
Because of their analogous transport structure, both equations \eqref{syst:VP} and \eqref{eq:euler} are often handled similarly, especially for the uniqueness issue. In \cite{loeper}, Loeper extends his uniqueness proof for \eqref{syst:VP} to \eqref{eq:euler}. Also the proof of uniqueness in \cite{robert} applies to both equations. We emphasize that this is not the case in the present paper, as is explained in Remarks \ref{remark:euler} and \ref{remark:euler-2}. This is due to the fact that for the Vlasov-Poisson system the Lagrangian trajectories satisfy a second-order ODE, while for the  Euler equations  they satisfy a first-order ODE. This crucial observation was already exploited in \cite{italiens-miot}, where it was proved that a logarithmic divergence on the macroscopic density still yields enough regularity for the force field to get well-posedness for the corresponding ODE.

\medskip

The paper is organized as follows. In the next Section \ref{sec:1} we recall a H\"older estimate  for a field that controls the force field. As a consequence we derive  a
second-order Gronwall estimate on a distance between the Lagrangian flows of two solutions, which leads to the proof of Theorem \ref{thm:VP-1}.  Section \ref{sec:2} is devoted to the proof of  Theorem \ref{thm:VP-2}. Finally in Section \ref{sec:3} we prove Theorem \ref{thm:VP-3} and we display in Proposition \ref{prop:initial} a large class of initial densities for which uniqueness holds. We conclude by commenting on the link with radially symmetric steady states in the two-dimensional gravitational case.

\medskip

\noindent \textbf{Notation. } In the remainder of the paper, the notation $C$ will denote a constant that can change from one line to another, depending only on $T$, $n$, $\|f\|_{L^\infty([0,T],L^1\cap L^\infty(\R^n\times \R^n))}$, and $\iint |v|^m f_0$ (this latter quantity only for the proof of Theorem \ref{thm:VP-2}) but independent on $p$ and $k$ as $p,k\to +\infty$.

\section{Proof of Theorem \ref{thm:VP-1}}\label{sec:1}

\subsection{Lagrangian formulation for weak solutions}

We consider a weak solution  $f\in L^\infty([0,T],L^1\cap L^\infty(\R^n\times \R^n))$ of \eqref{syst:VP} on $[0,T]$. We assume that $\rho\in L^\infty([0,T],L^1\cap L^p(\R^n))$ for some $p>n$. By potential estimates we have $E=c(n)\nabla \Delta^{-1}\rho\in L^\infty([0,T]\times \R^n)$, and 
\begin{equation}
\label{ineq:bounded}
\|E\|_{L^\infty([0,T],L^\infty)}\leq C_p\|\rho\|_{L^\infty([0,T],L^1\cap L^{p})}.
\end{equation} Moreover, $\nabla E\in L^\infty([0,T],L^p(\R^n))$ by virtue of the Cald\'eron-Zygmund ineqality, see \cite[Theo. 4.12]{javier}. Therefore it follows from DiPerna and Lions theory on transport equations \cite[Theo. III2]{dip-lions} that there exists a map $\Phi=(X,V)\in L_{\text{loc}}^1([0,T]\times \R^n\times\R^n; \R^n\times \R^n)$ such that for a.e. $(x,v)\in \R^n \times \R^n$, $t\mapsto (X,V)(t,x,v)$ is an absolutely continuous integral solution of the ODE
\begin{equation}\label{def:flow}
\begin{cases}
\dsp \dot{X}(t,x,v)=V(t,x,v),\quad X(0,x,v)=x\\
\dsp \dot{V}(t,x,v)=E(t,X(t,x,v)),\quad V(0,x,v)=v.
\end{cases}\end{equation}
%Moreover, $\Phi(t,\cdot,\cdot)$ preserves the Lebesgue measure on $\R^n\times \R^n$. 
Moreover,
\begin{equation}\label{representation}\forall t\in [0,T],\quad f(t)=\Phi(t)_\#f_0\end{equation} which means that $f(t)(B)=f_0\left((\Phi(t,\cdot,\cdot)^{-1}(B)\right)$ for all Borel set $B\subset \R^n$. Such a map is unique and is called Lagrangian flow associated to $E$. We refer also to  \cite[Theo. 5.7]{ambrosio-crippa} for a more recent statement and for further developments on the theory. 

%and
%\begin{equation}
%\label{ineq:holder}
%\sup_{t\in [0,T]} |E(t,x)-E(t,y)|\leq C(p)\: \|\rho\|_{L^\infty(L^1\cap L^{p}}|%x-y|^{1-\frac{n}{p}}.
%\end{equation}
We note that \eqref{ineq:bounded} implies that $t\mapsto\Phi(t,x,v)\in W^{1,\infty}([0,T])$ for a.e. $(x,v)\in  \R^n\times\R^n$.

\medskip

As a byproduct of our analysis we shall see in Paragraph \ref{subsec} that under the assumptions of Theorem \ref{thm:VP-1} the Lagrangian flow actually corresponds to the classical notion of  flow. 
\subsection{Estimate on the Lagrangian trajectories}
 We consider two solutions $f_1$ and $f_2\in L^\infty([0,T],L^1\cap L^\infty(\R^n\times\R^n))$  such that $\rho_1$ and $\rho_2$ belong to $L^\infty([0,T],L^p(\R^n))$ for some $p>n$.
 Denoting by $\Phi_1=(X_1,V_1)$ and $\Phi_2=(X_2,V_2)$ the corresponding Lagrangian flows, we introduce the distance
\begin{equation*}%\label{def:distance}
\mathcal{D}(t)=\iint_{\R^n\times \R^n} |X_1(t,x,v)-X_2(t,x,v)|f_0(x,v)\,dx\,dv.\end{equation*} 

\medskip

We infer from \eqref{def:flow} that
\begin{equation}\label{ineq:X}
\begin{split}
|X_1(t,x,v)-X_2(t,x,v)|\leq \int_0^t \int_0^s |E_1(\tau,X_1(\tau,x,v))-E_2(\tau,X_2(\tau,x,v))|\,d\tau\,ds.
\end{split}
\end{equation}
In particular, $\sup_{(x,v)}|X_1(t,x,v)-X_2(t,x,v)|\leq CT^2 (\|E_1\|_{L^\infty}+\|E_2\|_{L^\infty}),$
which shows that $\mathcal{D}$ defines a continuous function on $[0,T]$.
The purpose of this paragraph is to establish the estimate
\begin{proposition}\label{prop:gronwall}
For all $t\in [0,T]$ and for all $p>n$,
$$\mathcal{D}(t)\leq C\, p \max\left(1+\|\rho_1\|_{L^\infty([0,T],L^p)},\|\rho_2\|_{L^\infty([0,T],L^p)}\right)\int_0^t \int_0^s \mathcal{D}(\tau)\,d\tau\,ds.$$
\end{proposition}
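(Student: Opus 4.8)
The plan is to run a Gronwall argument directly on $\mathcal{D}$. Integrating the pointwise bound \eqref{ineq:X} against $f_0\,dx\,dv$ gives
\[
\mathcal{D}(t)\le \int_0^t\int_0^s\Big(\iint_{\R^n\times\R^n}\big|E_1(\tau,X_1(\tau,x,v))-E_2(\tau,X_2(\tau,x,v))\big|\,f_0\,dx\,dv\Big)\,d\tau\,ds,
\]
so it suffices to estimate, for each fixed $\tau$, the inner integral by $Cp\max\!\big(1+\|\rho_1(\tau)\|_{L^p},\|\rho_2(\tau)\|_{L^p}\big)\,\mathcal{D}(\tau)$. I would split the integrand through the intermediate point $X_2(\tau,x,v)$ into a \emph{same-field} term $|E_1(\tau,X_1)-E_1(\tau,X_2)|$ and a \emph{same-trajectory} term $|E_1(\tau,X_2)-E_2(\tau,X_2)|$.

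For the same-field term, Lemma \ref{lemma:fundamental} gives that $E_1(\tau,\cdot)$ is H\"older of exponent $1-n/p$ with semi-norm $\le Cp(1+\|\rho_1(\tau)\|_{L^p})$, so $\iint|E_1(\tau,X_1)-E_1(\tau,X_2)|f_0\le Cp(1+\|\rho_1(\tau)\|_{L^p})\iint|X_1-X_2|^{1-n/p}f_0$; by Jensen's inequality (concavity of $r\mapsto r^{1-n/p}$) the last integral is $\le\|f_0\|_{L^1}^{n/p}\mathcal{D}(\tau)^{1-n/p}$, which — using that $\|f_0\|_{L^1}^{n/p}$ stays bounded and that the flows remain, by \eqref{ineq:bounded}, in a fixed bounded set — I would reduce to the required form. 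For the same-trajectory term I would write $E_i(\tau,z)=\int_{\R^n}K(z-y)\rho_i(\tau,y)\,dy$ with $K(z)=z/|z|^n$ and use the pushforward identity \eqref{representation} to represent $\rho_1(\tau)$ and $\rho_2(\tau)$ as the $x$-marginals of $\Phi_i(\tau)_\#f_0$; this rewrites $E_1(\tau,z)-E_2(\tau,z)$ as $\iint[K(z-X_1(\tau,x',v'))-K(z-X_2(\tau,x',v'))]f_0(x',v')\,dx'\,dv'$. Evaluating at $z=X_2(\tau,x,v)$, integrating first in $(x,v)$ (whose $X_2$-image is distributed as $\rho_2(\tau)$), applying the kernel-difference bound
\[
\int_{\R^n}|K(w-a)-K(w-b)|\,\rho_2(\tau,w)\,dw\le Cp\big(1+\|\rho_2(\tau)\|_{L^p}\big)\,|a-b|^{1-n/p},
\]
and using Jensen once more in $(x',v')$, yields a bound of the same form for this term as well. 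Summing and reinserting into the double time integral gives the Proposition.

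The technical heart is the kernel-difference bound above (and, in essentially the same form, the estimate behind Lemma \ref{lemma:fundamental}), together with the bookkeeping of the $p$-dependence. I would prove it by splitting $\R^n$ into a near zone $\{|w-a|\le 2|a-b|\}$, on which $K(w-a)$ and $K(w-b)$ are integrated against $\rho_2\in L^p$ via $\| |z|^{1-n}\mathbf{1}_{\{|z|\le r\}}\|_{L^{p'}}\le C\,r^{1-n/p}$ with $C$ bounded as $p\to+\infty$, and a far zone $\{|w-a|>2|a-b|\}$, on which $|K(w-a)-K(w-b)|\le C|a-b|\,|w-a|^{-n}$ and $\| |z|^{-n}\mathbf{1}_{\{|z|\ge r\}}\|_{L^{p'}}\le C\,p\, r^{-n/p}$ — the asymptotics of this last norm being exactly where the factor $p$ in the statement comes from — plus the crude bound $|a-b|\,\|\rho_2\|_{L^1}$ for the tail $\{|w-a|\ge 1\}$, and the a priori bound $|a-b|\le C$ to compare $|a-b|$ with $|a-b|^{1-n/p}$. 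The main obstacle is that the modulus produced here is only H\"older, never Lipschitz: disposing of the sublinear power $\mathcal{D}(\tau)^{1-n/p}$ so as to close the Gronwall inequality relies entirely on the uniform-in-$p$ boundedness of the Lagrangian trajectories coming from \eqref{ineq:bounded}, and it is precisely this balance — a H\"older modulus with semi-norm growing like $p\|\rho\|_{L^p}$ acting on a fixed bounded displacement — that will later make the limit $p\to+\infty$, hence Theorem \ref{thm:VP-1} under condition \eqref{condition:miot}, go through.
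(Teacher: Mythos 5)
Your argument is essentially the paper's: the same decomposition through the intermediate point $X_2(\tau,x,v)$ into a same-field and a same-trajectory term, the same H\"older kernel estimate (Lemma \ref{lemma:fundamental}, proved by the same near/far splitting with the factor $p$ coming from $\||z|^{-n}\|_{L^{p'}}$ near the origin of the annulus), the same use of the pushforward identity \eqref{representation} plus Fubini for the cross term, and Jensen's inequality to pass from $\iint|X_1-X_2|^{1-n/p}f_0$ to $\mathcal{D}(\tau)^{1-n/p}$. The only point to correct is your claim that boundedness of the trajectories lets you ``reduce to the required form'' $\mathcal{D}(\tau)$: if $\mathcal{D}\leq M$ then $\mathcal{D}^{1-n/p}\geq M^{-n/p}\mathcal{D}$, so the inequality goes the wrong way for small $\mathcal{D}$ and the sublinear power cannot be removed --- but this matches the paper exactly, whose own proof also ends with $\mathcal{D}(\tau)^{1-\frac{n}{p}}$ (the first power $\mathcal{D}(\tau)$ in the displayed Proposition is evidently a typo for $\mathcal{D}(\tau)^{1-\frac{n}{p}}$, which is the form actually invoked in the proof of Theorem \ref{thm:VP-1}), and your final paragraph shows you intend to carry the sublinear power into the Yudovich-type argument, which is the right way to close.
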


\medskip 
The proof of Proposition \ref{prop:gronwall} relies on the following potential estimate, the proof of which is postponed at the end of this paragraph.
\begin{lemma}\label{lemma:fundamental}There exists $C>0$ such that for all $p>n$ and $g\in L^1\cap L^p(\R^n)$,
\begin{equation*}
\int_{\R^n} \left| \frac{x-z}{|x-z|^n}-\frac{y-z}{|y-z|^n}\right| |g(z)|\,dz\leq C p (\|g\|_{L^p}+\|g\|_{L^1}) |x-y|^{1-\frac{n}{p}}.
\end{equation*}

\end{lemma}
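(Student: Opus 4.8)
Write $K(w)=w/|w|^{n}$, so that $|K(w)|=|w|^{1-n}$ and $|\nabla K(w)|\le c_n|w|^{-n}$ for $w\ne 0$, and put $r=|x-y|$. The idea is to split $\R^n$ according to the size of $|x-z|$: a \emph{near} part $\{|x-z|<2r\}$, an \emph{intermediate} part $\{2r\le|x-z|<1\}$ (empty when $2r\ge 1$), and a \emph{far} part $\{|x-z|\ge\max(2r,1)\}$. On the near part I would use only the crude bound $|K(x-z)-K(y-z)|\le|x-z|^{1-n}+|y-z|^{1-n}$, together with the observation that $|y-z|<3r$ there. On the two other parts, where $|x-z|\ge 2r$, the segment joining $x-z$ to $y-z$ stays at distance $\ge|x-z|-r\ge|x-z|/2$ from the origin, so the mean value theorem gives $|K(x-z)-K(y-z)|\le c_n2^{n}\,r\,|x-z|^{-n}$; the choice of the cutoff radius $2r$ is dictated precisely by the need to keep this segment away from the singularity of $K$.

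I would then estimate the three contributions by Hölder's inequality with exponents $p$ and $p'=p/(p-1)$. On the near part one is reduced to $\big(\int_{|w|<3r}|w|^{(1-n)p'}\,dw\big)^{1/p'}$; the radial integral $\int_0^{3r}\rho^{(1-n)p'+n-1}\,d\rho$ converges \emph{exactly because} $p>n$ (this is what makes the exponent $>-1$), and equals a constant times $r^{\,n/p'-(n-1)}=r^{1-n/p}$, so the near part is $\le C\|g\|_{L^p}|x-y|^{1-n/p}$ with $C$ uniform for $p$ bounded away from $n$. On the intermediate part one is reduced to $r\big(\int_{2r\le|w|<1}|w|^{-np'}\,dw\big)^{1/p'}$; the radial integral $\int_{2r}^{1}\rho^{-np'+n-1}\,d\rho$ is now dominated by its lower endpoint and is $\le\frac{p-1}{n}(2r)^{-n(p'-1)}$ up to a dimensional constant, so after taking $p'$-th roots and multiplying by $r$ this part is $\le C\,(p-1)^{1/p'}\|g\|_{L^p}|x-y|^{1-n/p}\le C\,p\,\|g\|_{L^p}|x-y|^{1-n/p}$ --- this is where the linear factor of $p$ in the statement is produced. (Equivalently: decompose the intermediate part into dyadic annuli $2^{j}r\le|x-z|<2^{j+1}r$, $j\ge1$; each contributes $\lesssim 2^{-jn/p}r^{1-n/p}\|g\|_{L^p}$, and $\sum_{j\ge1}2^{-jn/p}=\frac{2^{-n/p}}{1-2^{-n/p}}\le\frac{2p}{n}$.) Finally, on the far part, where $|x-z|\ge 1$ so $|x-z|^{-n}\le1$, I would simply use the $L^1$ norm: the contribution is $\le c_n\,r\,(\max(2r,1))^{-n}\|g\|_{L^1}$, which a short case check (according to whether $r<1$ or $r\ge1$) bounds by $C\|g\|_{L^1}|x-y|^{1-n/p}$. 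Summing the three pieces and using $p\ge n$ to absorb the near and far constants into the factor $p$ gives the lemma.

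The computations are elementary; the two points deserving attention are the choice of cutoff $2r$ (needed to apply the mean value theorem) and the recognition that the linear factor $p$ comes \emph{not} from the singularity at $z=x$, nor from the genuinely far region --- both of which yield $p$-independent constants --- but from summing over the intermediate scales $r\lesssim|x-z|\lesssim1$, through $\sum_{j\ge1}2^{-jn/p}\sim\tfrac{p}{n\log 2}$. I would also note that the near-region constant degenerates as $p\downarrow n$, so the estimate is to be used --- as it is in Proposition~\ref{prop:gronwall} and in the Gronwall argument for Theorem~\ref{thm:VP-1} --- only for $p$ large; one may, if preferred, state the lemma for $p\ge 2n$.
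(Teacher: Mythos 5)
Your proposal is correct and follows essentially the same route as the paper's proof: a three-region splitting (a ball of radius comparable to $|x-y|$ around the singularities, an intermediate annulus out to radius $1$, and the far region), with H\"older's inequality near the singularities, the mean value theorem plus H\"older on the intermediate annulus --- which is indeed where the factor $p$ arises, via $(p'-1)^{-1/p'}\le p$ --- and the $L^1$ norm far away. The only cosmetic differences are that the paper centres its decomposition at the midpoint $(x+y)/2$ and disposes of the case $|x-y|\ge 1$ by a separate preliminary bound on each kernel term, whereas you centre at $x$ and fold that case into the far region.
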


\begin{remark}\label{remark:regularity}Setting $E[g]=x/|x|^n \ast g=c(n)\nabla \Delta^{-1}g$ we observe that Lemma \ref{lemma:fundamental} implies the estimate \begin{equation}\label{ineq:Eg}|E[g](x)-E[g](y)|\leq Cp(\|g\|_{L^p}+\|g\|_{L^1})|x-y|^{1-n/p}.\end{equation} This latter inequality can be obtained by combining
Morrey's inequality, which implies that $|E[g](x)-E[g](y)|\leq C\|\nabla E[g]\|_{L^p}|x-y|^{1-\frac{n}{p}}$, and  Calder\'on-Zygmund inequality, see \cite[Theo. 4.12]{javier}, which  implies that  $\|\nabla E[g]\|_{L^p}  \leq C p \|g\|_{L^p}$.
\end{remark}

\noindent \textbf{Proof of Proposition \ref{prop:gronwall}.}

\medskip

By \eqref{ineq:X}, we have
\begin{equation*}
\begin{split}
\mathcal{D}(t)&\leq \int_0^t \int_0^s
 \left(\iint_{\R^n\times \R^n} |E_1(\tau,X_1(\tau,x,v))-E_2(\tau,X_2(\tau,x,v))|f_0(x,v)\,dx\,dv\right)\,d\tau\,ds\\
 &\leq \int_0^t \int_0^s
 \left(\iint_{\R^n\times \R^n} |E_1(\tau,X_1(\tau,x,v))-E_1(\tau,X_2(\tau,x,v))|f_0(x,v)\,dx\,dv\right)\,d\tau\,ds\\
 &+ \int_0^t \int_0^s
 \left(\iint_{\R^n\times \R^n} |E_1(\tau,X_2(\tau,x,v))-E_2(\tau,X_2(\tau,x,v))|f_0(x,v)\,dx\,dv\right)\,d\tau\,ds\\
 &\leq I+J.
 \end{split}\end{equation*}

First, applying \eqref{ineq:Eg} to $E_1$ and using that $\rho_1\in L^\infty([0,T],L^1(\R^n))$ we obtain
\begin{equation*}\begin{split}
\iint_{\R^n\times \R^n}& |E_1(\tau,X_1(\tau,x,v))-E_1(\tau,X_2(\tau,x,v))|f_0(x,v)\,dx\,dv\\&\leq C p \left(1+\|\rho_1\|_{L^\infty(L^p)}\right)\iint_{\R^n\times \R^n}
|X_1(\tau,x,v)-X_2(\tau,x,v)|^{1-\frac{n}{p}}f_0(x,v)\,dx\,dv.\end{split}\end{equation*}
Therefore by Jensen's inequality we find
\begin{equation}\label{ineq:I}
I\leq Cp \left(1+\|\rho_1\|_{L^\infty(L^p)}\right) \int_0^t \int_0^s \mathcal{D}(\tau)^{1-\frac{n}{p}}\,d\tau\,ds.
\end{equation}

Next, inserting that $f_2(\tau)=\Phi_2(\tau)_\#f_0$, we obtain
\begin{equation*}
 \begin{split}
\iint_{\R^n\times \R^n}& |E_1(\tau,X_2(\tau,x,v))-E_2(\tau,X_2(\tau,x,v))|f_0(x,v)\,dx\,dv\\&=\iint_{\R^n\times \R^n} |E_1(\tau,x)-E_2(\tau,x)|f_2(\tau,x,v)\,dx\,dv.\end{split}\end{equation*}
On the other hand, since $f_1(\tau)=\Phi_1(\tau)_\#f_0$ and $f_2(\tau)=\Phi_2(\tau)_\#f_0$,
\begin{equation}\label{eq:E}
\begin{split}
E_1(\tau,x)-E_2(\tau,x)
&=\gamma\iint_{\R^n\times \R^n} 
\left(\frac{x-X_1(\tau,y,w)}{|x-X_1(\tau,y,w)|^n}-
\frac{x-X_2(\tau,y,w)}{|x-X_2(\tau,y,w)|^n}\right)f_0(y,w)\,dy\,dw.\end{split}
 \end{equation}
 Therefore, we obtain by Fubini's theorem
 \begin{equation*}\begin{split}
&\iint_{\R^n\times \R^n} |E_1(\tau,X_2(\tau,x,v))-E_2(\tau,X_2(\tau,x,v))|f_0(x,v)\,dx\,dv\\&\leq 
\int_{\R^n}\left| \iint_{\R^n\times \R^n} 
\left(\frac{x-X_1(\tau,y,w)}{|x-X_1(\tau,y,w)|^n}-
\frac{x-X_2(\tau,y,w)}{|x-X_2(\tau,y,w)|^n}\right)f_0(y,w)\,dy\,dw\right|\rho_2(\tau,x)\,dx\\
&\leq \iint_{\R^n\times \R^n}\left( \int_{\R^n} 
\left|\frac{x-X_1(\tau,y,w)}{|x-X_1(\tau,y,w)|^n}-
\frac{x-X_2(\tau,y,w)}{|x-X_2(\tau,y,w)|^n}\right|\rho_2(\tau,x)\,dx\right)f_0(y,w)\,dy\,dw\\
&\leq \iint_{\R^n\times \R^n} C p\left(\|\rho_2(\tau)\|_{L^1}+ \|\rho_2(\tau)\|_{L^p}\right) |X_1(\tau,y,w)-X_2(\tau,y,w)|^{1-\frac{n}{p}}f_0(y,w)\,dy\,dw,\end{split}\end{equation*}
where we have applied Lemma \ref{lemma:fundamental} in the last inequality.
Hence Jensen's inequality yields
\begin{equation}\label{ineq:J}
J\leq C\, p\left(1+\|\rho_2\|_{L^\infty(L^p)}\right)  \int_0^t \int_0^s \mathcal{D}(\tau)^{1-\frac{n}{p}}\,ds\,d\tau.
\end{equation}

The conclusion follows from \eqref{ineq:I} and \eqref{ineq:J}.

\begin{remark}\label{remark:euler}A similar function can be introduced to establish uniqueness for \eqref{eq:euler} with bounded vorticity, see e.g.  \cite[Theo. 3.1, Chapter 2]{livrejaune},
$$\tilde{\mathcal{D}}(t)=\int_{\R^2}|X_1(t,x)-X_2(t,x)||\omega_0(x)|\,dx,$$
where $X_1$ and $X_2$ denote the Lagrangian flows
\begin{equation*}
\label{def:flow-euler}
\dot{X}_i(t,x)=u_i(t,X_i(t,x)),\quad X(0,x)=x.\end{equation*}
By the same arguments as in the proof of Proposition \ref{prop:gronwall}, it satisfies \begin{equation*}\begin{split}
\tilde{\mathcal{D}}(t)&\leq C\, p \max\left(\|\omega_1\|_{L^\infty(L^1\cap L^p)},\|\omega_2\|_{L^\infty(L^1\cap L^p)}\right)\int_0^t  \tilde{\mathcal{D}}^{1-\frac{2}{p}}(s)\,ds\end{split}\end{equation*}
therefore, by conservation of the $L^p$ norms of the vorticity,
\begin{equation*}\begin{split}
\tilde{\mathcal{D}}(t)&\leq C\, p \|\omega_0\|_{L^1\cap L^p}\int_0^t  \tilde{\mathcal{D}}^{1-\frac{2}{p}}(s)\,ds.\end{split}\end{equation*} 
\end{remark}

\medskip
\medskip

\noindent \textbf{Proof of Lemma \ref{lemma:fundamental}.}

The proof for $p=\infty$ is well-known, see e.g. \cite[Chapter 8]{majda-bertozzi} for the case $n=2$. When $p<+\infty$ it is obtained by very similar arguments, but we provide the full details because we are not aware of any reference in the literature. Let $p_0>n$. By H\"older inequality, we have 
\begin{equation*}\begin{split}
\sup_{x\in \R^n}
\int_{\R^n} \left| \frac{x-z}{|x-z|^n}\right| |g(z)|\,dz&\leq
\sup_{x\in \R^n}
\left(\int_{|x-z|\leq 1}  \frac{|g(z)|}{|x-z|^{n-1}} \,dz+
\int_{|x-z|\geq 1}  \frac{|g(z)|}{|x-z|^{n-1}} \,dz\right)\\
&\leq \|g\|_{L^{p_0}}\||z|^{-n+1}\|_{L^{p_0'}(B(0,1))}+ \|g\|_{L^1}\\
&\leq C_{p_0}( \|g\|_{L^1}+ \|g\|_{L^{p_0}}),\end{split}
\end{equation*}
with $C_{p_0}$ depending only on $p_0$. Hence it suffices to establish Lemma \ref{lemma:fundamental} for $|x-y|<1$. Let us introduce $d=|x-y|$ and $A=(x+y)/2$. We split the integral as
 \begin{equation*}\begin{split}
&\int_{\R^n}  \left| \frac{x-z}{|x-z|^n}-\frac{y-z}{|y-z|^n}\right| |g(z)|\,dz=
\int_{\R^n\setminus B(A,1)} \left| \frac{x-z}{|x-z|^n}-\frac{y-z}{|y-z|^n}\right| |g(z)|\,dz\\
&+\int_{B(A,1)\setminus B(A,d)} \left| \frac{x-z}{|x-z|^n}-\frac{y-z}{|y-z|^n}\right| |g(z)|\,dz+
\int_{B(A,d)}  \left| \frac{x-z}{|x-z|^n}-\frac{y-z}{|y-z|^n}\right| |g(z)|\,dz\\
&=I+J+K.
\end{split}
\end{equation*}

For $|z-A|\geq 1$ we have $\min (|x-z|,|y-z|)\geq 1-d/2\geq 1/2$, hence
 \begin{equation*}\begin{split}
I&\leq \int_{\R^n\setminus B(A,1)}\frac{|g(z)|}{|x-z|^{n-1}}\,dz+\int_{\R^n\setminus B(A,1)}\frac{|g(z)|}{|y-z|^{n-1}}\,dz \leq C\|g\|_{L^1}.
\end{split}\end{equation*}

Next, applying first H\"older inequality, then the mean-value theorem, we obtain
\begin{equation*}\begin{split}
J&\leq\|g\|_{L^p}\left( \int_{B(A,1)\setminus B(A,d)}  \left| \frac{x-z}{|x-z|^n}-\frac{y-z}{|y-z|^n}\right|^{p'}\,dz\right)^{1/p'}\\
& \leq \|g\|_{L^p}\:d \left(\int_{B(A,1)\setminus B(A,d)} \sup_{u\in [x,y]} \frac{1}{|u-z|^{np'}} \,dz\right)^{1/p'}.
\end{split}
\end{equation*}
Now, for $ |z-A|\geq d$ we have $|u-z|\geq |z-A|-|u-A|\geq |z-A|/2$ for any $u\in[x,y]$. Therefore 
\begin{equation*}\begin{split}
J&\leq C d  \|g\|_{L^p} \left(\int_{B(A,1)\setminus B(A,d)}  \frac{1}{|z-A|^{np'}} \,dz\right)^{1/p'}
%&\leq 2^n d  \|f\|_{L^p} \left(\int_d^1r^{n-1-np'}  \,dr\right)^{1/p'}\\
\leq C d\, \|g\|_{L^p} d^{n(1-\frac{1}{p'})}(p'-1)^{-\frac{1}{p'}}\end{split}
\end{equation*}
hence
\begin{equation*}\begin{split}
J&\leq C p\, \|g\|_{L^p}  d^{1-\frac{n}{p}}.
\end{split}
\end{equation*}

Applying again H\"older inequality, we obtain
\begin{equation*}
\begin{split}
K&\leq \|g\|_{L^p} \left(\int_{B(A,d)}  \frac{1}{|x-z|^{p'(n-1)}}\,dz\right)^{1/p'}+ \|g\|_{L^p} \left(\int_{B(A,d)}  \frac{1}{|y-z|^{p'(n-1)}}\,dz\right)^{1/p'}.
\end{split}
\end{equation*}
Since for $|z-A|\leq d$ we have $\max(|x-z|,|y-z|)\leq 3d/2$, we finally obtain
\begin{equation*}\begin{split}
K\leq 2 \|g\|_{L^p}  \left(\int_{B(0,3d/2)} \frac{1}{|u|^{p'(n-1)}}\right)^{1/p'}\leq C\|g\|_{L^p} \: d^{1-\frac{n}{p}}.
\end{split}
\end{equation*}

\medskip

\subsection{Proof of Theorem \ref{thm:VP-1} }

Given two solutions $f_1$ and $f_2$ of \eqref{syst:VP} satisfying the assumptions of Theorem \ref{thm:VP-1}, let $\mathcal{D}$ be the corresponding distance function. Since $\max(\|\rho_1\|_{L^\infty(L^p)},\|\rho_2\|_{L^\infty(L^p)})\leq C p$ by assumption, 
Proposition \ref{prop:gronwall} implies that
$$\mathcal{D}(t)\leq C\, p^2 \int_0^t \int_0^s \mathcal{D}^{1-\frac{n}{p}}(\tau)\,d\tau\,ds.$$
Let $\mathcal{F}(t)=\int_0^t \int_0^s \mathcal{D}^{1-\frac{n}{p}}(\tau)\,d\tau\,ds.$ Since $\mathcal{D}\in C([0,T])$ we have $\mathcal{F}\in C^2([0,T])$, with
$$ \forall t\in [0,T],\quad \mathcal{F}''(t)\leq C \,p^{2} \mathcal{F}^{1-\frac{n}{p}}(t).$$
We next argue similarly as in the proof of Lemma 4 in \cite{italiens-miot}. We multiply the previous inequality by $\mathcal{F}'(t)\geq 0$ and integrate on $[0,t]$. We obtain
$$\forall t\in [0,T],\quad(\mathcal{F}'(t))^2\leq C\,p^2 \mathcal{F}(t)^{2-\frac{n}{p}}$$
therefore
$$\forall t\in [0,T],\quad \mathcal{F}'(t)\leq C \, p \mathcal{F}(t)^{1-\frac{n}{2p}}.$$
We now conclude as in the proof of the uniqueness of bounded solutions of the 2D Euler equations, see e.g. \cite{yudovich-1, majda-bertozzi}: integrating the above inequality yields
$$\forall p>n,\quad \forall t\in [0,T],\quad \mathcal{F}(t)\leq (Ct)^{\frac{2p}{n}}.$$
Letting $p\to +\infty$ we obtain that $\mathcal{F}(t)=0$ for $t\in [0,1/C]$. Repeating the argument of intervals of length $1/C$ we finally prove that $ \mathcal{F}$, therefore also $\mathcal{D}$, vanishes on $[0,T]$. This implies that for all $t\in [0,T]$ we have $X_1(t,\cdot,\cdot)=X_2(t,\cdot,\cdot)$  $f_0\,dx\,dv$ - a.e. We infer from \eqref{eq:E} that for all $t\in [0,T]$, $E_1(t,\cdot)=E_2(t,\cdot)$ on $\R^n$. By \eqref{def:flow}, it follows that $V_1(t,\cdot,\cdot)=V_2(t,\cdot,\cdot)$ on $\R^n\times \R^n$. We conclude that for all $t\in [0,T]$ we have $f_1(t,\cdot,\cdot)=f_2(t,\cdot,\cdot)$  a.e. on $\R^n\times \R^n$.

\begin{remark}
\label{remark:euler-2} In the setting of \eqref{eq:euler}, the estimate obtained for $\tilde{\mathcal{D}}$ in Remark \ref{remark:euler} yields $$ \forall p>2,\quad \tilde{\mathcal{D}}(t)\leq (C \|\omega_0\|_{L^p}\, t)^p,$$ which does not enable to conclude that $\mathcal{D}=0$ as above unless $\omega_0\in L^\infty.$   
\end{remark}

\subsection{The Lagrangian flow is the classical flow}\label{subsec}
We conclude this section with the following remark: let $f$ be a weak solution of \eqref{syst:VP} satisfying the assumptions of Theorem \ref{thm:VP-1}. In view of Remark \ref{remark:regularity} we have
$$ \forall p>n, \sup_{t\in [0,T]}|E(t,x)-E(t,y)|\leq Cp^2|x-y|^{1-\frac{n}{p}}.$$
By space continuity of $E$, Ascoli-Arzela's theorem implies that {for all} $(x,v)\in \R^n\times \R^n$ there exists a curve $\gamma\in W^{1,\infty}([0,T];\R^n\times\R^n)$ which is a solution to the ODE \eqref{def:flow}.  Moreover, if $\gamma_1$ and $\gamma_2$ are two such integral curves then $d(t)=\int_0^t\int_0^s|\gamma_1-\gamma_2|(\tau)\,d\tau\,ds$ satisfies $d''\leq Cp^2d^{1-n/p}$. So by exactly the same arguments as in the proof of Theorem \ref{thm:VP-1} above, $d=0$ on $[0,T]$. This means that the ODE \eqref{def:flow} is well-posed for all $(x,v)\in \R^n\times \R^n$ and that the Lagrangian flow actually is a classical flow.

\section{Proof of Theorem \ref{thm:VP-2}}\label{sec:2}

We start by recallig an elementary inequality, which can be found in  \cite[(14)]{LP} for the case $n=3$, and which can be easily adapted to the case $n=2$. Let $f\in L^1\cap L^\infty(\R^n\times \R^n)$ be nonnegative and $\rho_f(x)=\int f(x,v)\,dv$. Then
\begin{equation}
\label{ineq:lp}
\begin{split}
\forall k\geq 1,\quad \|\rho_f\|_{L^{\frac{k+n}{n}}(\R^n)}\leq C\|f\|_{L^\infty}^{\frac{k}{k+n}}\left(\iint_{\R^n\times \R^n} |v|^k f(x,v)\,dx\,dv\right)^{\frac{n}{k+n}},
\end{split}\end{equation}
where $C$ is a constant independant on $k$.

\medskip

Now, let $f_0$ satisfy the assumptions of Theorem \ref{thm:VP-2} and let $f$ be any weak solution on $[0,T]$ with this initial data given by \cite[Theo. 1]{LP}. By construction we have 
\begin{equation}\label{moment:m}
\sup_{t\in[0,T]}\iint_{\R^n\times \R^n} |v|^m f(t,x,v)\,dx\,dv<+\infty.
\end{equation} In view of \eqref{ineq:lp}, in order to control the norms $\|\rho(t)\|_{L^p}$ for large $p$ it suffices to prove that 
\begin{equation*}
\label{cond:3}
\forall k>0, \quad \sup_{t\in [0,T]}\|f(t)\|_{L^\infty}^{\frac{k}{k+n}}M_k(t)^{\frac{n}{k+n}}\leq C k,
\end{equation*}
where
 $$M_k(t)=\iint_{\R^n\times \R^n} |v|^k f(t,x,v)\,dx\,dv=\iint_{\R^n\times \R^n} |V(t,x,v)|^k f_0(x,v)\,dx\,dv.$$  Since $f\in L^\infty([0,T],L^\infty(\R^n))$ this amounts to showing that
\begin{equation}
\label{cond:3}
\forall k>0, \quad \sup_{t\in [0,T]}M_k(t)^{\frac{n}{k+n}}\leq C k.
\end{equation}
At this stage it is not known whether all the $M_k(t)$ remain finite for $t>0$. We prove next that this is indeed the case and that \eqref{cond:3} can be achieved thanks to \eqref{moment:m} in a much easier way as for the propagation \eqref{moment:m} itself, which is the heart of the matter of \cite{LP}. As a matter of fact, since $m>n^2-n$ we infer from \eqref{ineq:lp} and \eqref{moment:m} that $\rho\in L^\infty([0,T],L^{p_0}(\R^n))$ with $p_0=(m+n)/n>n$ depending only on $n$ and $m$. It follows that $E\in L^\infty([0,T],L^\infty(\R^n))$ by \eqref{ineq:bounded}.
%with$$\|E\|_{L^\infty([0,T],L^\infty(\R^n))}\leq C(p_0)\|\rho\|_{L^\infty([0,T],L^1\cap L^{p_0}(\R^n)},$$for any $p_0>n$.

For $k>m$, we have by \eqref{def:flow} 
 \begin{equation*}\begin{split}
 |V(t,x,v)|^k&\leq |v|^k+k\int_0^t |V(s,x,v)|^{k-1}|E(s,X(s,x,v)|\,ds\\
 &\leq |v|^k +k\|E\|_{L^\infty([0,T]\times \R^n)}\int_0^t |V(s,x,v)|^{k-1}\,ds.
 \end{split}\end{equation*}
 Integrating with respect to $f_0(x,v)\,dx\,dv$ we get
 \begin{equation*}
 \begin{split}
 M_k(t)&\leq M_k(0)+k\|E\|_{L^\infty([0,T]\times \R^n)}\int_0^t M_{k-1}(s)\,ds.
 \end{split}
 \end{equation*} By induction, we first infer that $\sup_{t\in [0,T]}M_k(t)$ is finite for any $k>m$. On the other hand,
we obtain by H\"older inequality
 $$M_{k-1}(s)\leq \|f(s)\|_{L^1}^{\frac{1}{k}}M_k(s)^{1-\frac{1}{k}},$$
 therefore, since $\|f(s)\|_{L^1}=\|f_0\|_{L^1}$ by \eqref{representation} we get
 \begin{equation*}\begin{split}
M_k(t)&\leq M_k(0)+Ck \int_0^t M_k(s)^{1-\frac{1}{k}}\,ds.
 \end{split}
 \end{equation*}
Integrating this Gronwall inequality leads to
 \begin{equation*}
\sup_{t\in [0,T]} M_k(t)^{\frac{1}{k}}\leq M_k(0)^{\frac{1}{k}}+C.
\end{equation*}
By assumption on $M_k(0)$ we find
 \begin{equation*}
\sup_{t\in [0,T]} M_k(t)^{\frac{1}{k}}\leq (C_0k)^\frac{1}{n}+C\leq (Ck)^\frac{1}{n}
\end{equation*}
therefore, finally,
 \begin{equation*}
\sup_{t\in [0,T]} M_k(t)^{\frac{n}{n+k}}\leq Ck,
\end{equation*}
and the conclusion follows.

\section{Proof of Theorem \ref{thm:VP-3}} \label{sec:3}

\subsection{Seeking for initial data.}

In this section we construct a collection of initial densities that satisfy the assumptions of Theorem \ref{thm:VP-2} and that do not necessarily enter in the framework of Loeper's uniqueness condition. We will consider nonnegative measurable functions $\varphi$ on $\R$ such that \begin{equation}\label{compact}\text{supp}(\varphi)\subset ]-\infty, M]\text{ for some } M\in \R.\end{equation}

\begin{proposition}\label{prop:initial}
Let $\varphi\in L^\infty(\R,\R_+)$ satisfy \eqref{compact}. Let  $\Phi:\R^n\to \R$ and $a:\R^n\times\R^n\to \R_+$ be two measurable functions. We set
\begin{equation}\label{def:initial}
f_0(x,v)=\varphi\left(|v|^2+\Phi(x)+a(x,v)\right).\end{equation}
We assume that $\rho_0=\int f_0\,dv$ has compact support in $B\subset \R^n$, and that
\begin{equation*}
\label{cond:phi}
\forall p\geq 1,\quad \int_{B}\left(M-\Phi(x)\right)_+^{p}\,dx\leq (C_0 p)^{\frac{2p}{n}},
\end{equation*}for some constant $C_0$.
Then any initial density given by
$$f_0\,h_0,\quad \text{where } h_0\in  L^\infty(\R^n\times \R^n),$$
satisfies the assumptions of Theorem \ref{thm:VP-2}.
\end{proposition}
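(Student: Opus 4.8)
The plan is to check that $\tilde f_0:=f_0h_0$ satisfies the three hypotheses of Theorem~\ref{thm:VP-2}: membership in $L^1\cap L^\infty(\R^n\times\R^n)$ together with nonnegativity (the latter being built into the statement that $f_0h_0$ is an initial density); finiteness of a velocity moment of some order $m>n^2-n$; and the quantitative bound $\iint|v|^k\tilde f_0\,dx\,dv\le(Ck)^{k/n}$ for every $k\ge1$. Since $0\le\tilde f_0\le\|h_0\|_{L^\infty}f_0$ a.e., it suffices to prove all of this for $f_0$ itself: indeed $\|h_0\|_{L^\infty}(C_0k)^{k/n}=\big(\|h_0\|_{L^\infty}^{n/k}C_0k\big)^{k/n}\le\big(\max(1,\|h_0\|_{L^\infty}^n)\,C_0k\big)^{k/n}$ for $k\ge1$, so a moment bound of the required shape for $f_0$ upgrades to one for $\tilde f_0$.

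The key structural observation is a pointwise bound: since $\varphi\ge0$ is supported in $]-\infty,M]$, $\varphi\in L^\infty$, and $a\ge0$, for a.e.\ $(x,v)$ one has
\begin{equation*}
f_0(x,v)=\varphi\big(|v|^2+\Phi(x)+a(x,v)\big)\le\|\varphi\|_{L^\infty}\,\1\big\{|v|^2+\Phi(x)\le M\big\}=\|\varphi\|_{L^\infty}\,\1\big\{|v|\le(M-\Phi(x))_+^{1/2}\big\}.
\end{equation*}
Moreover, as $\rho_0$ is supported in $B$ and $f_0\ge0$, we have $f_0(x,\cdot)=0$ for a.e.\ $x\notin B$. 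Integrating first in $v$ (using $\int_{|v|\le R}|v|^k\,dv=\tfrac{n\omega_n}{k+n}R^{k+n}$, with $\omega_n$ the volume of the unit ball) and then over $x\in B$ gives, for every $k\ge0$,
\begin{equation*}
\iint_{\R^n\times\R^n}|v|^kf_0(x,v)\,dx\,dv\le\frac{n\omega_n\|\varphi\|_{L^\infty}}{k+n}\int_B(M-\Phi(x))_+^{\frac{k+n}{2}}\,dx.
\end{equation*}
Taking $k=0$ and applying the hypothesis at $p=n/2\ge1$ (recall $n\ge2$) shows $f_0\in L^1$, while $f_0\le\|\varphi\|_{L^\infty}$ gives $f_0\in L^\infty$; this is the first hypothesis.

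For the velocity moments I apply the hypothesis with $p=(k+n)/2\ (\ge1$ for all $k\ge0)$, obtaining
\begin{equation*}
\iint_{\R^n\times\R^n}|v|^kf_0\,dx\,dv\le\frac{n\omega_n\|\varphi\|_{L^\infty}}{k+n}\Big(\tfrac{C_0(k+n)}{2}\Big)^{\frac{k+n}{n}}<\infty ,
\end{equation*}
so the second hypothesis holds with, say, $m=n^2-n+1$. For the third, fix $k\ge1$: using $k+n\le(n+1)k$, the identity $\big(\tfrac{C_0(n+1)k}{2}\big)^{\frac{k+n}{n}}=\tfrac{C_0(n+1)k}{2}\big(\tfrac{C_0(n+1)k}{2}\big)^{k/n}$, and $\tfrac{1}{k+n}\cdot\tfrac{C_0(n+1)k}{2}\le\tfrac{C_0(n+1)}{2}$, the right-hand side is at most $C_2\big(\tfrac{C_0(n+1)}{2}k\big)^{k/n}$ with $C_2=\tfrac{n\omega_n\|\varphi\|_{L^\infty}C_0(n+1)}{2}$; since $C_2^{n/k}\le\max(1,C_2^n)$ for $k\ge1$, this is $\le(Ck)^{k/n}$ for a constant $C$ depending only on $n$, $C_0$ and $\|\varphi\|_{L^\infty}$. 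Together with the reduction of the first paragraph, this verifies the hypotheses of Theorem~\ref{thm:VP-2} for $\tilde f_0$. I expect no genuine obstacle here: the only point demanding care is the elementary bookkeeping that converts the raw bound $\big(C_0(k+n)/2\big)^{(k+n)/n}$ into the clean form $(Ck)^{k/n}$, i.e.\ checking that the exponent mismatch $\tfrac{k+n}{n}$ versus $\tfrac kn$ and the multiplicative constants are absorbed uniformly in $k\ge1$.
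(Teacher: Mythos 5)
Your proof is correct and follows essentially the same route as the paper: the support condition on $\varphi$ together with $a\geq 0$ gives $|v|^2\leq M-\Phi(x)$ on $\operatorname{supp}f_0$, one integrates in $v$ to reduce the $k$-th moment to $\int_B(M-\Phi)_+^{(k+n)/2}\,dx$, and then applies the hypothesis at $p=(k+n)/2$. The only difference is that you carry out explicitly the constant/exponent bookkeeping converting $\bigl(C_0(k+n)/2\bigr)^{(k+n)/n}$ into $(Ck)^{k/n}$, which the paper leaves implicit.
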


\begin{proof}

Since for $(x,v)\in  \text{supp}f_0$ we have $|v|^2\leq M-\Phi(x)$, we obtain
\begin{equation*}\begin{split}
\iint_{\R^n\times \R^n}& |v|^kf_0(x,v)h_0(x,v)\,dx\,dv\\
&\leq \omega_n \|h_0\|_{L^\infty}
 \int_{B}\left(M-\Phi(x)\right)_+^{\frac{k}{2}}\rho_0(x)\,dx\\
&\leq \omega_n \|h_0\|_{L^\infty}\|\varphi\|_{L^\infty} \int_{B}\left(M-\Phi(x)\right)_+^{\frac{k+n}{2}}\,dx.\end{split}\end{equation*} Finally, the condition of Theorem \ref{thm:VP-2} is fulfilled provided
\begin{equation*}
\forall k\geq 1,\quad 
\int_{B}\left(M-\Phi(x)\right)_+^{\frac{k+n}{2}}\,dx\leq (C_0 k)^{\frac{k}{n}},
\end{equation*}
and this concludes the proof.
\end{proof}

%\begin{remark}
%In Proposition \ref{prop:initial} one can replace the assumption on the support of $\rho_0$ by the requirement that $\varphi$ is nonincreasing, in which case the assumption on the $L^p$ norms is replaced by
%\begin{equation*}
%\label{cond:phi}
%\forall p\geq 1,\quad \int_{\R^n}\varphi(\Phi(x))\left(M-\Phi(x)\right)_+^{p}\,dx\leq (C_0 p)^{\frac{2p}{n}},
%\end{equation*}for some constant $C_0$.
%\end{remark}

\subsection{Proof of Theorem \ref{thm:VP-3}.}

 We consider an initial density given by \eqref{def:initial} with the choice
$$\varphi=\1_{\R_-},\quad \Phi(x)=-(\ln_- |x|)^{\frac{2}{n}},\quad a=0,$$
so that
\begin{equation*}
\rho_0(x)=\left|\left\{v\::|v|^2-(\ln- |x|)^{\frac{2}{n}}\leq 0\right\}\right|=\omega_n \ln_-|x|,\quad \forall x\in \R^n.
\end{equation*}
Besides, a straightforward computation yields
\begin{equation}\label{eq:stirling}
\forall p\geq 1,\quad \int_{\R^n}(\ln_- |x|)^{p}\,dx=\sigma_n n^{-(p+1)}p!
\end{equation}where $\sigma_n$ denotes the surface of $\partial B(0,1)$, so that by Stirling's formula we get
\begin{equation}\label{ineq:verif}
\forall p\geq 1,\quad \int_{\R^n}(\ln_- |x|)^{\frac{2p}{n}}\,dx \leq (C p)^{\frac{2p}{n}}.
\end{equation}
The conclusion follows by invoking Proposition \ref{prop:initial}.

\subsection{Steady states in the two-dimensional gravitational case}

In this last paragraph we focus on  the Vlasov-Poisson equation  \eqref{syst:VP} in the gravitational case for $n=2$, which can be rewritten as 
\begin{equation}
\label{syst:VP-bis}\begin{cases}
\dsp  \partial_t f+v\cdot \nabla_x f-\nabla U\cdot \nabla_v f=0 \quad \text{on }\R_+\times \R^2\\
\dsp U(t,x)=\int_{\R^2}\ln|x-y|\rho(t,y)\,dy \\
\dsp \rho(t,x)=\int_{\R^n} f(t,x,v)\,dv.\end{cases}
\end{equation}
Every function of the form
\begin{equation}\label{def:steady}
\overline{f}(x,v)=\varphi\left(\frac{|v|^2}{2}+U(x)\right),
\end{equation}
with $\varphi\in C^1(\R,\R)$, is a stationary solution of \eqref{syst:VP-bis}. Existence of steady states of the form \eqref{def:steady} and their stability properties, especially in three dimensions, have been studied intensively (see \cite{batt-morrison-rein, dolbeault, guo-rein, lmr} and references therein).  By variational methods, Dolbeault, Fern\'andez and S\'anchez \cite[Theo. 1, Theo. 22]{dolbeault} obtained the existence of a steady solution $\overline{f}\in L^1(\R^2\times \R^2)$ of the form \eqref{def:steady}, where  $\varphi$ is continuous, nonincreasing and satisfies \eqref{compact}. Moreover $\overline{\rho}=\int \overline{f}\,dv$ is radially symmetric, compactly supported in $B(0,1)$, and $U$ is of class $C^1$ on $\R^2\setminus \{0\}$. In particular, $U$ has the simple expression, see \cite[Lemma 12]{dolbeault}
\begin{equation*}\begin{split}
U(x)&=\ln |x|\int_{|y|\leq |x|} \overline{\rho}(|y|)\,dy+
\int_{|y|>|x|}\ln {|y|}\rho(|y|)\,dy\\
&=\ln |x|\left(\int_{\R^2} \overline{\rho}(|y|)\,dy\right)+
\int_{|y|>|x|}\ln\left(\frac{|y|}{|x|}\right)\rho(|y|)\,dy.\end{split}
\end{equation*}
Note that $U$ is well defined for all $x\neq 0$ in view of the assumption on the support of $\overline{\rho}$. We remark that $\overline{f}$ does not have to belong to $L^\infty(\R^2\times \R^2)$.

\begin{theorem}\label{thm:VP-4}
Let $\overline{f}$ be as above. Then for any $K>0$, any initial density given by
$$\overline{f}\,\1_{ \{\overline{f}\leq K\}}\,h_0,\quad \text{where } h_0\in L^\infty(\R^2\times \R^2),$$
satisfies the assumptions of Theorem \ref{thm:VP-2}.
\end{theorem}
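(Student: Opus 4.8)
The plan is to reduce the statement to Proposition \ref{prop:initial}. Put $m_0=\int_{\R^2}\overline{\rho}>0$. Since $\overline{f}(x,v)=\varphi\big(\tfrac{|v|^2}{2}+U(x)\big)$, one has the pointwise identity $\overline{f}\,\1_{\{\overline{f}\le K\}}(x,v)=\varphi_K\big(\tfrac{|v|^2}{2}+U(x)\big)$ with $\varphi_K:=\varphi\,\1_{\{\varphi\le K\}}$, the restriction of $\varphi$ to its own sublevel set. This $\varphi_K$ is measurable (here $\varphi$ is continuous), nonnegative, bounded by $K$, and still supported in $]-\infty,M]$. Writing the argument as $\tfrac{1}{2}(|v|^2+2U(x))$, we see that $\overline{f}\,\1_{\{\overline{f}\le K\}}$ is exactly of the form \eqref{def:initial} with profile $\psi_K(s):=\varphi_K(s/2)\in L^\infty(\R,\R_+)$, which satisfies \eqref{compact} with $M$ replaced by $2M$, with $\Phi:=2U$ and $a\equiv 0$. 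Moreover $\psi_K\big(|v|^2+2U(x)\big)\le\overline{f}(x,v)$, so its velocity integral is $\le\overline{\rho}(x)$ and is therefore compactly supported in $B(0,1)$. Hence I would be left to check the single hypothesis of Proposition \ref{prop:initial}, which for $n=2$ amounts to
\[
\forall p\ge 1,\qquad \int_{B(0,1)}\big(2M-2U(x)\big)_+^{p}\,dx\le (C_0 p)^{p}
\]
for some constant $C_0$.

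For this I would use the explicit formula for $U$ recalled above to get the lower bound
\[
U(x)=m_0\ln|x|+\int_{|y|>|x|}\ln\!\Big(\tfrac{|y|}{|x|}\Big)\overline{\rho}(|y|)\,dy\ \ge\ m_0\ln|x|\qquad\text{on }B(0,1),
\]
since the last integrand is nonnegative. Consequently, for $|x|\le 1$, $2M-2U(x)\le 2M+2m_0\ln_-|x|$, whence $\big(2M-2U(x)\big)_+\le C\,(1+\ln_-|x|)$ and thus $\big(2M-2U(x)\big)_+^{p}\le (2C)^{p}\big(1+(\ln_-|x|)^{p}\big)$. Integrating over $B(0,1)$ and invoking \eqref{eq:stirling} together with the trivial bound $p!\le p^{p}$ yields the displayed estimate, and Proposition \ref{prop:initial} then gives the conclusion for any $h_0\in L^\infty(\R^2\times\R^2)$.

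The only delicate point is the first step: $\overline{f}$, like $\varphi$, need not be bounded, so Proposition \ref{prop:initial} --- whose statement requires $\varphi\in L^\infty$ --- cannot be applied to $\overline{f}$ directly. The cutoff $\1_{\{\overline{f}\le K\}}$ is precisely what bounds the profile while preserving the structural form \eqref{def:initial}, i.e.\ keeping it a function of the single variable $\tfrac{|v|^2}{2}+U(x)$; in particular no use is made of the monotonicity of $\varphi$. Everything after that is routine: a one-line lower bound for $U$ near the origin and the elementary computation \eqref{eq:stirling}.
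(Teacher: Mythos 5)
Your proof is correct and follows essentially the same route as the paper: both arguments rest on the observation that the cutoff $\1_{\{\overline{f}\leq K\}}$ bounds the profile, on the lower bound $U(x)\geq \big(\int\overline{\rho}\big)\ln|x|$ coming from the nonnegativity of the extra term in the formula for $U$, and on the Stirling-type estimate \eqref{eq:stirling}--\eqref{ineq:verif}. The only (cosmetic) difference is that you package the truncation as a new bounded profile $\varphi_K$ so that Proposition \ref{prop:initial} applies verbatim with $\Phi=2U$ and $a\equiv 0$, whereas the paper keeps $\Phi=\big(\int\overline{\rho}\big)\ln|x|$, puts the remainder into $a\geq 0$, and re-runs the computation of the proposition directly.
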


\begin{proof}Note that $\overline{f}\,\1_{ \{\overline{f}\leq K\}}\,h_0\in L^1\cap L^\infty(\R^2\times \R^2)$.
Since $\overline{\rho}$ is supported in $B(0,1)$ we have $U(x)=\ln|x| (\int \overline{\rho})$ for $|x|\geq 1$, from which we infer that $\overline{f}(x,v)=0$ whenever $|x|\geq N=\exp(M/(\int \overline{\rho}))$. 
In addition, we observe that $\overline{f}$ takes the form \eqref{def:initial}, where we have set \begin{equation*}
\Phi(x)=\ln |x|\left(\int_{\R^2} \overline{\rho}(|y|)\,dy\right),\quad a(x,v)=\int_{|y|>|x|}\ln\left(\frac{|y|}{|x|}\right)\rho(|y|)\,dy\geq 0,
\end{equation*}the only difference with the setting of Proposition \ref{prop:initial} is that $\varphi$ can be possibly  unbounded on $\R$. Mimicking the proof of Proposition \ref{prop:initial} we still obtain
\begin{equation*}\begin{split}
\iint_{\R^2\times \R^2}|v|^k\,&(\overline{f}\,\1_{ \{\overline{f}\leq K\}})(x,v)\,h_0(x,v)\,dx\,dv\\
&\leq \|h_0\|_{L^\infty}K\int_{B(0,N)}\left(\int_{B(0,C(M+(\int \overline{\rho})|\ln |x||)^{1/2})}|v|^k\,dv\right)\,dx\\
&\leq C\int_{B(0,N)}\left(M+\big(\int \overline{\rho}\big)|\ln |x||\right)^{\frac{k+2}{2}}\,dx\\
&\leq (Ck)^{\frac{k}{2}},\end{split}\end{equation*} 
where we have used \eqref{ineq:verif} in the last inequality.

\end{proof}

\medskip

\noindent \textbf{Acknowledgments} {The author thanks Daniel Han-Kwan for interesting discussions. She is partially supported by the French ANR projects SchEq ANR-12-JS-0005-01 and GEODISP ANR-12-BS01-0015-01.}

\end{document}